\newcommand{\R}{\ensuremath{\mathbb{R}}\xspace}
\newcommand{\pr}[2]{\ensuremath{\langle {#1},{#2}\rangle}}
\newcommand{\norm}[1]{\ensuremath{\|#1\|}}
\newcommand{\ma}{\ensuremath{\mathcal{A}}\xspace}
\newcommand{\mb}{\ensuremath{\mathcal{B}}\xspace}
\newcommand{\mn}{\ensuremath{\mathcal{N}}\xspace}
\newcommand{\nuc}[1]{\ensuremath{\Bbbk ({#1})}\xspace}
\newcommand{\nucc}[1]{\ensuremath{\Bbbk_c({#1})}\xspace}
\newcommand{\bts}[3]{\ensuremath{{#1}\bigotimes_{#2}{#3}}\xspace}
\newcommand{\bats}[2]{\ensuremath{{#1}\bigodot{#2}}\xspace}
\newcommand{\tmin}[2]{\ensuremath{{#1}\bigotimes_{\min}{#2}}\xspace}
\newcommand{\tmax}[2]{\ensuremath{{#1}\bigotimes_{\max}{#2}}\xspace}
\newcommand{\al}{\ensuremath{\alpha}\xspace}
\newcommand{\la}{\ensuremath{\lambda}\xspace}
\newcommand{\La}{\ensuremath{\Lambda}\xspace}
\newcommand{\ov}[1]{\overline{#1}}
\newcommand{\cs}{$C^*$-algebra\xspace}
\newcommand{\fbs}{Fell bundles\xspace}
\newcommand{\fela}{\ensuremath{\ma=(A_t)_{t\in G}}\xspace}
\newcommand{\felbh}{\ensuremath{\mb=(B_s)_{s\in H}}\xspace}
\renewcommand{\k}[1]{\mathcal{K}({#1})}
\newcommand{\bc}{\begin{center}}
\newcommand{\ec}{\end{center}}
\newcommand{\be}{\begin{enumerate}}
\newcommand{\ee}{\end{enumerate}}
\newcommand{\bi}{\begin{itemize}}
\newcommand{\ei}{\end{itemize}}
\newcommand{\bd}{\begin{description}}
\newcommand{\ed}{\end{description}}
\newcommand{\beq}{\begin{equation}}
\newcommand{\eeq}{\end{equation}}
\newcommand{\beqa}{\begin{eqnarray}}
\newcommand{\eeqa}{\end{eqnarray}}
\newcommand{\bfr}{\begin{flushright}}
\newcommand{\efr}{\end{flushright}}
\newcommand{\bfl}{\begin{flushleft}}
\newcommand{\efl}{\end{flushleft}}
\newcommand{\bp}{\begin{picture}}
\newcommand{\ep}{\end{picture}}
\DeclareMathOperator{\gen}{span}
\title{Kernels of Fell bundles tensor products}
\address{Centro de Matem\'atica\\ 
         Facultad de Ciencias\\
         Universidad de la Rep\'ublica\\
         Igu\'a 4225\\
         CP 11400\\
         Montevideo--URUGUAY}
\email{fabadie@cmat.edu.uy}
\theoremstyle{plain}
\newtheorem{thm}{Theorem}[section]
\newtheorem{prop}[thm]{Proposition}
\newtheorem{lem}[thm]{Lemma}
\newtheorem{cor}[thm]{Corollary}          
\theoremstyle{definition}
\newtheorem{df}[thm]{Definition}
\theoremstyle{remark}
\newtheorem{rk}[thm]{Remark}
\date{}
\begin{document}

\author[F. Abadie]{Fernando Abadie}
\begin{abstract}
  We prove that if $\ma=(A_t)_{t\in G}$ and $\mb=(B_s)_{s\in H}$ are Fell bundles over the locally compact groups $G$ and $H$ respectively, then the minimal (maximal) tensor product of the C*-algebra of kernels of $\ma$ with the C*-algebra of kernels of $\mb$ agrees with the C*-algebra of kernels of the minimal (respectively: maximal) tensor product of $\ma$ and $\mb$.  

\end{abstract}

\maketitle

\section{Introduction}\label{sec:intro}
\par In \cite{tens} we developed a theory of tensor products of Fell bundles, and we studied the relationship betweeen the cross-sectional C*-algebras of these tensor products with the tensor products of the corresponding bundles. In this article we will do something similar but in relation to the algebras of kernels of the bundles instead of cross-sectional algebras. The C*-algebra of kernels of a Fell bundle $\ma$ can be described (in addition to the descrption we will give below) as $\nuc{\ma}=C^*_r(\ma)\rtimes_{\delta}\hat{G}$ where $\delta$ is the dual coaction of $G$ on $C^*_r(\ma)$, and it helps to express a form of Takai duality for crossed products by partial actions. Besides, there is a canonical action $\beta$ of $G$ on $\nuc{\ma}$ (which corresponds to the dual action of $G$ on $C^*_r(\ma)\rtimes_{\delta}\hat{G}$ in the picture above) which plays an important role in the problem of globalization of partial actions (we refer the reader to \cite{env}).    
For this reason one can also interpret the results of this article with an eye on the tensor products of partial actions.
\medskip
\par We want to show that $\nuc{\tmin{\ma}{\mb}}=\tmin{\nuc{\ma}}{\nuc{\mb}}$, and similarly for the maximal C*-tensor product. The strategy will be the following. As shown in \cite{env}, one can see the C*-algebra $I(\ma)$ of compact operators on the Hilbert module $L^2(\ma)$ as an ideal of $\nuc{\ma}$, and we know the linear $\beta$-orbit of $I(\ma)$ is dense in $\nuc{\ma}$. This means that $\beta$ is the enveloping action of its restriction to $I(\ma)$ and, something we will take advantage of later, it allows to compute the norm of $k\in\k{\ma}$ as the supremum of the norms of $k$ as a multiplier of the ideals $\beta_t(\ma)$ (see Proposition~\ref{prop:norm} below). In this way one essentially reduces the problem to proving that $I(\tmin{\ma}{\mb})=\tmin{I(\ma)}{I(\mb)}$ and $I(\tmax{\ma}{\mb})=\tmax{I(\ma)}{I(\mb)}$. For this we will serve ourselves from the results in \cite{trings} and \cite{tens}.  
\bigskip
\par Before we start we will recall some facts about tensor products and about Fell bundles. 
\par Let us first recall the notion of exterior tensor product of Hilbert modules. So suppose that $E$ is a Hilbert module over the C*-algebra $A$, and $F$ is a Hilbert module over the C*-algebra $B$. Let $\bats{E}{F}$ be the algebraic tensor product of $E$ and $F$. Using the universal property of tensor products, it can be shown that there exists a unique bilinear map $(\bats{E}{F})\times (\bats{E}{F})\to\bats{A}{B}$ such that
\begin{equation}\label{eqn:bil}
  \pr{x\odot y}{x'\odot y'}=\pr{x}{x'}\odot\pr{y}{y'},\ \forall x,x'\in E,\  y,y'\in F.
\end{equation}

Besides, it can be shown that this map is actually a $\bats{A}{B}$-valued inner product (see \cite[page 34 and Chapter~6]{l}). Then, considering on $\bats{A}{B}$ the minimal C*-norm $\norm{\ }_{\textrm{min}}$ (which is the spatial norm), one can perform the so called double completion process (\cite[Chapter~1]{l}) and obtain a Hilbert $\tmin{A}{B}$-module $\tmin{E}{F}$, called the \textit{exterior tensor product} of $E$ and $F$. However, instead of $\norm{\ }_{\textrm{min}}$, one can use another C*-norm $\alpha$ on $\bats{A}{B}$ to perform the double completion process, for instance the maximal C*-norm $\norm{\ }_{\textrm{max}}$. This induces a norm $\tilde{\alpha}$ on $\bats{E}{F}$, and the corresponding completion $\bts{E}{\tilde{\alpha}}{F}$ turns out to be a Hilbert $\bts{A}{\alpha}{B}$-module. Of course, the relation between $\alpha$ and $\tilde{\alpha}$ is, as usual
\begin{gather}\label{eqn:alphas}
  \tilde{\alpha}(z)^2=\alpha(\pr{z}{z}),  z\in \bts{E}{\tilde{\alpha}}{F}\quad\textrm{ and }\quad\alpha(c)=\sup\{\tilde{\alpha}(zc): \tilde{\alpha}(z)\leq 1.\}
  \end{gather}
  \par In \cite{trings} it was shown that (provided $E$ and $F$ are full modules) the map $\alpha\mapsto\tilde{\alpha}$ is an isomorphism between the poset $\mn(\bats{A}{B})$ and the poset $\mn(\bats{E}{F})$ of C*-norms on $\bats{A}{B}$ and $\bats{E}{F}$ respectively (\cite[Theorem~5.12]{trings}). In particular we have $\widetilde{\norm{\ }}_{\textrm{min}}=\norm{\ }_{\textrm{min}}$ and $\widetilde{\norm{\ }}_{\textrm{max}}=\norm{\ }_{\textrm{max}}$. The work in \cite{trings} uses the language of C*-trings rather than that of Hilbert modules. For the topic of C*-trings we refer the reader to \cite{z}, where Zettl introduced and studied them.

\par Going now to define the tensor product of two Fell bundles $\ma=(A_r)_{r\in G}$ and $\mb=(B_s)_{s\in H}$, we begin by considering the algebraic tensor products $\bats{A_r}{B_s}$ and, given a C*-norm $\alpha$ on $\bats{A_e}{B_e}$, we want to extend it to all of the bundle $\bats{\ma}{\mb}=\{\bats{A_r}{B_s}\}$, so that we obtain a Fell bundle after completing. When this is possible, we call such a C*-norm \textit{extendable}. The condition for a C*-norm to be extendable is simple, and it is essentially the fact that when we perform the construction of $\bts{A_r}{\tilde{\alpha}}{B_s}$, the result is the same if we do it as a right or as left Hilbert $\bts{A_e}{\alpha}{B_e}$-module. Fortunately, the minimal and maximal norms on $\bts{A_e}{}{B_e}$ are both extendable. As shown in \cite{tens}, the map $\gamma\mapsto\gamma|_{\bats{A_e}{B_e}}$ is an isomorphism of posets, between the set $\mn(\bats{\ma}{\mb})$ of C*-norms on $\bats{\ma}{\mb}$ and the set $\mn(\bats{A_e}{B_e})$ of extendable C*-norms on $\bats{A_e}{B_e}.$ 

\medskip
\par We end this section with the following result about Hilbert modules, which is certainly well-known. A proof, in terms of C*-trings, can be found in \cite[Proposition~4.1]{env}.

\begin{prop}\label{prop:zhm}
Let $E$ be a full right Hilbert $A$-module and $F$ a right Hilbert $B$-module. If $\pi:E\to F$ is a linear map such that $\pi(x\pr{y}{z})=\pi(x)\pr{\pi(y)}{\pi(z)}$ $\forall x,y,z\in E$, then there exists a unique homomorphism $\pi^r:A\to B$ of C*-algebras such that $\pi^r(\pr{y}{z})=\pr{\pi(y)}{\pi(z)}$ $\forall y,z\in E$, and $\pi(xa)=\pi(x)\pi^r(a)$, $\forall x\in E$, $a\in A$. Moreover $\pi^r$ is injective if and only if so is $\pi$. An analogous result holds for left Hilbert modules instead of right ones. 
\end{prop}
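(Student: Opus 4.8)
The plan is to define $\pi^r$ first on the dense two-sided ideal $A_0:=\gen\{\pr{y}{z}:y,z\in E\}$ of $A$, which is dense precisely because $E$ is full, and then to extend it continuously to $A$. On $A_0$ there is only one admissible definition, $\pi^r\big(\sum_i\pr{y_i}{z_i}\big):=\sum_i\pr{\pi(y_i)}{\pi(z_i)}$, so once continuity is available uniqueness will be automatic. To see this is well defined, suppose $a=\sum_i\pr{y_i}{z_i}=0$ in $A$ and set $b:=\sum_i\pr{\pi(y_i)}{\pi(z_i)}\in B$. For every $x\in E$ the hypothesis on $\pi$ gives $\pi(x)b=\sum_i\pi(x\pr{y_i}{z_i})=\pi(xa)=0$; taking $x=y_j$ and using $\pr{u}{v}b=\pr{u}{vb}$ we obtain $b^*b=\sum_j\pr{\pi(z_j)}{\pi(y_j)b}=0$, hence $b=0$. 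Read in the same way, this computation also shows $\pi(xa)=\pi(x)\pi^r(a)$ for all $x\in E$ and $a\in A_0$.

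Next I would check that $\pi^r\colon A_0\to B$ is a $*$-\hm. It is linear by construction and $*$-preserving because $\pr{y}{z}^*=\pr{z}{y}$; it is multiplicative because $\pr{y}{z}\pr{y'}{z'}=\pr{y}{z\pr{y'}{z'}}$, so that, using the hypothesis on $\pi$ once more, $\pi^r(\pr{y}{z}\pr{y'}{z'})=\pr{\pi(y)}{\pi(z)\pr{\pi(y')}{\pi(z')}}=\pi^r(\pr{y}{z})\pi^r(\pr{y'}{z'})$.

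The crux — and the only step that is not purely algebraic, since a $*$-\hm defined merely on a dense $*$-subalgebra can fail to be bounded — is to show that $\pi^r$ is contractive, so that it extends to a \hm $\pi^r\colon A\to B$. This is equivalent to the assertion that the ternary homomorphism $\pi$ is automatically bounded, which is a standard fact in Zettl's theory of \cts \cite{z}; it is here that the completeness of $E$ and the positivity of the inner products are used. Concretely, one reduces to the \ct generated by a single $x\in E$, which is commutative, and verifies via Gelfand theory that $\norm{\pi(x)}\le\norm{x}$, equivalently that $\sig(\pr{\pi(x)}{\pi(x)})\subseteq\sig(\pr{x}{x})\cup\{0\}$ (the containment failing in the incomplete case is exactly what the completeness of $E$ rules out). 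Granting contractivity, $\pi^r$ extends by density, the identity $\pi(xa)=\pi(x)\pi^r(a)$ passes to every $a\in A$ by continuity, and uniqueness holds because $\pi^r$ is already prescribed on the dense ideal $A_0$.

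It remains to treat injectivity. If $\pi^r$ is injective and $\pi(x)=0$, then $\pi^r(\pr{x}{x})=\pr{\pi(x)}{\pi(x)}=0$, so $\pr{x}{x}=0$ and $x=0$. Conversely, if $\pi$ is injective and $\pi^r(a)=0$, then $\pi(xa)=\pi(x)\pi^r(a)=0$ for every $x\in E$, whence $xa=0$; in particular $za=0$ for all $z\in E$, so $\pr{y}{z}a=\pr{y}{za}=0$ and thus $A_0a=0$. As $A_0$ is dense this forces $a^*a=0$, i.e. $a=0$. The assertion for left Hilbert modules follows by the symmetric argument, with right inner products and $A$ replaced by left inner products and $\k{E}$.
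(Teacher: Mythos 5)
Your argument is correct, and it is essentially the route the paper takes: the paper gives no inline proof but defers to \cite[Proposition~4.1]{env}, whose underlying argument is exactly yours --- define $\pi^r$ on the dense ideal spanned by the inner products (well-definedness via the $b^*b=0$ computation), check it is a $*$-\hm there, invoke the automatic contractivity of ternary \hms from Zettl's theory of \cts to extend it, and then deduce the injectivity equivalence. You correctly isolate the contractivity of $\pi$ as the only nontrivial analytic input; just be aware that your parenthetical Gelfand-theory sketch of that input is looser than the rest (establishing the spectral inclusion without presupposing some boundedness of $\pi$ is precisely the delicate point in \cite{z}), so it is best left as the citation you give rather than as an argument.
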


\begin{rk}\label{rk:zhm}
A particular case that we will encounter later is the following. Suppose that $E$ is a right Hilbert $A$-module, and let $\k{E}$ be the C*-algebra of compact operators. Then, if $\theta_{x,y}:E\to E$ represents the compact operator on $E$, so that $\theta_{x,y}(z):=z\pr{x}{y}$, it is well known that $(E,\theta_{\cdot,\cdot })$ is a full left Hilbert $\k{E}$-module. Then, if $F$ is another Hilbert $A$-module and $\pi:E\to F$ is a linear map such that $\pi(\pr{x}{y})=\pr{\pi(x)}{\pi(y)}$, $\forall x,y\in E$, by the left version of Proposition~\ref{prop:zhm} we get a homomorphism of C*-algebras $\pi^l:\k{E}\to \k{F}$ such that $\pi^l(\theta_{x,y})=\theta_{\pi(x),\pi(y)}$, $\forall x,y\in E$. In particular $\pi^l$ is an isomorphism whenever so is $\pi$ (we note in passing that $\pi^r=id$).   
\end{rk}

\section{Kernels of a Fell bundle}\label{sec:kers}
\par Given a Fell bundle $\ma=(A_r)_{r\in G}$ over the locally compact group $G$, by a continuous kernel of compact support of $\ma$ we mean a continuous map $k:G\times G\to \ma$ of compact support such that $k(r,s)\in A_{rs^{-1}}$, $\forall r,s\in G$. The vector space $\nucc{\ma}$ of kernels of compact support of $\ma$ is actually an $A_e$-module with $ka(r,s):=k(r,s)a$, $\forall r,s\in G$, $a\in A_e$, $k\in\nucc{\ma}$. We also have operations making $\nucc{\ma}$ a $*$-algebra, namely: a product given by $k_1*k_2(r,s):=\int_Gk_1(r,t)k_2(t,s)dt$, and an involution given by $k^*(r,s):=k(s,r)^*$. In addition, $\nucc{\ma}$ is a normed $*$-algebra with $\norm{k}_{2}:=\big(\int_{G\times G}\norm{k(r,s)}^2d(r,s)\big)^{1/2}$, where the integration is with respect to the Haar measure of $G\times G$. The completion of $\nucc{\ma}$, which is a Banach $*$-algebra denoted by $\mathcal{HS}(\ma)$, is called the algebra of the kernels of Hilbert-Schmidt of $\ma$, and its universal enveloping C*-algebra $\nuc{\ma}:=C^*(\mathcal{HS}(\ma))$, is the C*-algebra of kernels of $\ma$.

\subsection{Canonical action on the kernels.} It is clear that if $k\in\nucc{A}$ and $t\in G$, then $\beta_t(k):G\times G\to\ma$, such that $\beta_t(r,s):=\Delta(t)k(rt,st)$ $\forall r,s\in G$, is also an element of $\nucc{\ma}$ (here $\Delta$ is the modular function of $G$). It follows that $k\mapsto\beta_t(k)$ is an action of $G$ on $\nucc{\ma}$. Moreover, we have $\norm{\beta_t(k)}_2=\norm{k}_2$, so the action extends to a an isometric action of $G$ on $\mathcal{HS}(\ma)$, and hence to an action of $G$ on $\nuc{\ma}$. If we need to remark that the underlying Fell bundle is $\ma$, we write $\beta^\ma$ instead of $\beta$. 
\subsection{Hilbert modules of a Fell bundle.} Given $t\in G$, let $C_c^t(\ma)$ be the vector space of those continuous functions $\xi:G\to\ma$ with compact support such that $\xi(r)\in A_{rt}, \forall r\in G$ (that is: $C_c^t(\ma)=C_c(\ma^t)$, where $\ma^t$ is the retraction of $\ma$ by the left translation $r\mapsto rt$ on $G$).  
\par We have natural actions $C_c^t(\ma)\times A_e\to C_c^t(\ma)$ and $\nucc{\ma}\times C_c^t(\ma)\to C_c^t(\ma)$, given by $(\xi a)(r):=\xi(r)a$ and $k\xi(r):=\int_Gk(r,s)\xi(s)ds$, $\forall \xi\in C_c^t(\ma)$, $r\in G$, $k\in\nucc{\ma}$. In this way $C_c^t(\ma)$ is a $(\nucc{\ma}-A_e)$-bimodule. We also have an inner product $\pr{\,}{}^t_r:C_c^t(\ma)\times C_c^t(\ma)\to A_e$, given by $\pr{\xi}{\eta}_r^t:=\int_G\xi(r)^*\eta(r)dr$.
\par We denote by $L^2_t(\ma)$ the Hilbert $A_e$-module obtained by completing $C_c^t(\ma)$ with respect to $\pr{\,}{}_r^t$. Note that $L^2_e(\ma)$ is the usual Hilbert module $L^2(\ma)$, where the left regular representation of $\ma$ takes place. 
When $t=e$ we just put $\pr{\xi}{\eta}_r$ and $L^2(\ma)$ instead of $\pr{\xi}{\eta}^e_r$ and $L^2_e(\ma)$ respectively.

\begin{df}
We denote by $I_t(\ma):=\k{L^2_t(\ma)}$ ( with the usual exception for $t=e$, when we will put in general $I(\ma)$ instead of $I_e(\ma)$).  
\end{df}

\medskip 
\begin{prop}\label{prop:isomhb}
The map $\rho_t:C_c(\ma)\to C_c^t(\ma)$, such that $\rho_t(\xi)(r):=\Delta(t)^{1/2}(\xi)(rt)$ $\forall \xi\in C_c(\ma)$, $r\in G$, extends to a unitary operator of Hilbert $A_e$-modules $L^2(\ma)\to L^2_t(\ma)$, and it is also an isomorphism of C*-trings. 
\end{prop}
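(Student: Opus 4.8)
The plan is to verify the required algebraic and metric properties directly on $C_c(\ma)$ and then extend by continuity, letting the modular factor $\Delta(t)^{1/2}$ do all the work. First I would check that $\rho_t$ actually lands in $C_c^t(\ma)$: for $\xi\in C_c(\ma)$ we have $\xi(rt)\in A_{rt}$, hence $\rho_t(\xi)(r)=\Delta(t)^{1/2}\xi(rt)\in A_{rt}$, which is precisely the grading condition defining $C_c^t(\ma)$; continuity and compact support are evidently preserved by the right translation $r\mapsto rt$. At the same time $\rho_t$ is right $A_e$-linear, since $\rho_t(\xi a)(r)=\Delta(t)^{1/2}\xi(rt)a=(\rho_t(\xi)a)(r)$.

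The heart of the argument is that $\rho_t$ preserves the $A_e$-valued inner product, and this is exactly what the square-root modular factor is tuned for. Writing everything out,
\[
\pr{\rho_t(\xi)}{\rho_t(\eta)}_r^t=\int_G\Delta(t)\,\xi(rt)^*\eta(rt)\,dr,
\]
I would then invoke the right-translation formula $\int_G f(rt)\,dr=\Delta(t)^{-1}\int_G f(r)\,dr$ for the left Haar measure of $G$; the two modular factors cancel and the integral becomes $\pr{\xi}{\eta}_r$. Thus $\rho_t$ is isometric for $\norm{\ }_2$ and extends to an isometry of Hilbert $A_e$-modules $L^2(\ma)\to L^2_t(\ma)$.

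To promote this isometry to a unitary I would simply display an inverse, namely $\sigma:C_c^t(\ma)\to C_c(\ma)$ with $\sigma(\eta)(r):=\Delta(t)^{-1/2}\eta(rt^{-1})$; the same grading check gives $\sigma(\eta)(r)\in A_r$, and one-line computations yield $\sigma\circ\rho_t=\mathrm{id}$ and $\rho_t\circ\sigma=\mathrm{id}$. An inner-product-preserving bijection is a unitary of Hilbert modules, which settles the first claim. The C*-tring statement then comes for free: the ternary product of the positive C*-tring $L^2_t(\ma)$ is $[x,y,z]=x\pr{y}{z}_r^t$, and combining right $A_e$-linearity with preservation of the inner product gives
\[
\rho_t\big(x\pr{y}{z}_r\big)=\rho_t(x)\pr{y}{z}_r=\rho_t(x)\pr{\rho_t(y)}{\rho_t(z)}_r^t,
\]
so $\rho_t$ intertwines the ternary products and is a C*-tring isomorphism. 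The only point demanding genuine care is the bookkeeping of $\Delta$ in the change of variables — keeping the exponent $1/2$ and the direction of the translation formula straight — but beyond that no real obstacle arises.
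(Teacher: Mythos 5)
Your proof is correct and follows essentially the same route as the paper: the key step in both is the change-of-variables computation showing that the factor $\Delta(t)^{1/2}$ makes $\rho_t$ preserve the $A_e$-valued inner product. The only (minor) difference is that you establish surjectivity by exhibiting the explicit inverse $\sigma(\eta)(r)=\Delta(t)^{-1/2}\eta(rt^{-1})$, whereas the paper argues that the extended isometry has closed range containing the dense subspace $C_c^t(\ma)$; both are perfectly adequate.
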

\begin{proof}
  It is clear that $\rho_t$ is an $A_e$-linear map. Let $\xi,\eta\in C_c(\ma)$. Then
  \begin{gather*}
    \pr{\rho_t(\xi)}{\rho_t(\eta)}_r^t=\int_G\rho_t(\xi)(r)^*\rho_t(\eta)(r)dr
    =\int_G\Delta(t)^{1/2}\xi(rt)^*\Delta(t)^{1/2}\eta(rt)dr\\
    =\Delta(t)\int_G\xi(rt)^*\eta(rt)dr
    =\Delta(t)\int_G\Delta(t)^{-1}\xi(s)^*\eta(s)dr
    =\pr{\xi}{\eta}_r.
  \end{gather*}
  Then $\rho_t$ is isometric and can be extended by continuity to an isometry on $L^2(\ma)$. Thus $\rho_t$ has closed range, which clearly contains $C_c^t(\ma)$, and therefore $\rho_t$ is surjective. Hence $\rho_t$ is unitary \cite[Theorem~3.5]{l}. The last statement is a direct consequence of the first one.   
\end{proof}

\par As a consequence of Corollary~\ref{cor:isomhb} ans Proposition~\ref{prop:zhm} we get:

\begin{cor}\label{cor:isomhb}
The map $\rho_t^l:I(\ma)\to I_t(\ma)$ 
is an isomorphism of C*-algebras. It is determined by the fact that $\rho_t^l(\pr{\xi}{\eta}_l)=\pr{\rho_t(\xi)}{\rho_t(\eta)}_l$, $\forall \xi,\eta\in L^2(\ma)$. 
\end{cor}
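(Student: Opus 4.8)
The plan is to obtain the statement as an immediate application of Remark~\ref{rk:zhm} (the left-module form of Proposition~\ref{prop:zhm}) to the map $\pi=\rho_t$, with $E=L^2(\ma)$ and $F=L^2_t(\ma)$. Recall that by definition $I(\ma)=\k{L^2(\ma)}$ and $I_t(\ma)=\k{L^2_t(\ma)}$, so the two \css in the statement are exactly the compacts on the two Hilbert $A_e$-modules joined by $\rho_t$; the content is then to transport the \ct isomorphism $\rho_t$ of Proposition~\ref{prop:isomhb} to an isomorphism between these compacts.

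First I would verify the hypothesis of Remark~\ref{rk:zhm}. By Proposition~\ref{prop:isomhb} the map $\rho_t$ is a unitary of Hilbert $A_e$-modules, so it is $A_e$-linear and preserves the right inner product; consequently it preserves the ternary product $\xi\pr{\eta}{\zeta}_r$ and is an isomorphism of \cts, which is precisely what is required. Remark~\ref{rk:zhm} then produces a \hm of \css $\rho_t^l:\k{L^2(\ma)}\to\k{L^2_t(\ma)}$, i.e. $\rho_t^l:I(\ma)\to I_t(\ma)$, with $\rho_t^l(\theta_{\xi,\eta})=\theta_{\rho_t(\xi),\rho_t(\eta)}$; since the left inner product on the module is given by $\pr{\xi}{\eta}_l=\theta_{\xi,\eta}$, this is exactly the defining relation $\rho_t^l(\pr{\xi}{\eta}_l)=\pr{\rho_t(\xi)}{\rho_t(\eta)}_l$ in the statement. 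That $\rho_t^l$ is an \iso follows from the last clause of the remark, because $\rho_t$ is bijective; and the relation determines $\rho_t^l$ because the operators $\theta_{\xi,\eta}$ span a dense subalgebra of $I(\ma)$. As a sanity check one notes that $\rho_t^l$ is nothing but conjugation $T\mapsto\rho_t T\rho_t^*$ by the unitary $\rho_t$, which makes the isomorphism property transparent.

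Since each step is a direct appeal to results already established, I do not expect a genuine obstacle; the only points that need care are bookkeeping. I would make explicit the identification $\pr{\xi}{\eta}_l=\theta_{\xi,\eta}$ so that the two forms of the defining relation visibly agree, and I would emphasize that it is the \emph{left} Hilbert module structure that is used, for then the fullness needed to invoke Proposition~\ref{prop:zhm} holds automatically (every Hilbert module is full over its own compacts) and no fullness hypothesis on $A_e$ is required. Finally I would correct the reference in the sentence introducing the statement so that it cites Proposition~\ref{prop:isomhb} rather than the corollary itself.
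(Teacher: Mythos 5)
Your proposal is correct and follows exactly the route the paper intends: the corollary is stated as an immediate consequence of Proposition~\ref{prop:isomhb} together with Proposition~\ref{prop:zhm} (in its left-module form, Remark~\ref{rk:zhm}), applied to the unitary $\rho_t:L^2(\ma)\to L^2_t(\ma)$. You have in fact supplied more detail than the paper does, including the identification $\pr{\xi}{\eta}_l=\theta_{\xi,\eta}$ and the (correct) observation that the reference preceding the corollary should point to Proposition~\ref{prop:isomhb} rather than to the corollary itself.
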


\subsection{Producing kernels.}
For each $t\in G$, it is posible to obtain kernels of $\ma$ using elements of $C_c^t(\ma)$. In fact, if $\xi,\eta\in C_c^t(\ma)$, it is clear that $\pr{\xi}{\eta}^t_l:G\times G\to \ma$ such that $\pr{\xi}{\eta}^t_l(r,s)=\xi(r)\eta(s)^*$ is an element of $\nucc{\ma}$. We define $I_c^t(\ma):=\textrm{span}\{\pr{\xi}{\eta}^t_l:\xi,\eta\in C_c^t(\ma)\}\subseteq\nucc{\ma}$. When $t=e$ we just put $\pr{\xi}{\eta}_l$ and $I_c(\ma)$ instead of $\pr{\xi}{\eta}^e_l$ and $I_c^e(\ma)$ respectively.
\par Note that if $\xi,\eta\in C_c(\ma)$, we have $\beta_t(\pr{\xi}{\eta}_l)(r,s)=\Delta(t)\xi(rt)\eta(st)^*=\Delta(t)^{1/2}\xi(rt)\Delta(t)^{1/2}\eta(st)^*=\pr{\rho_t\xi}{\rho_t\eta}_l^{t}(r,s)$, so $\beta_t(\pr{\xi}{\eta}_l)=\pr{\rho_t\xi}{\rho_t\eta}_l^{t}$. It follows that $\beta_t(I_c(\ma))=I_c^{t}(\ma)$, $\forall t\in G$.  Observe also that each $I_c^t(\ma)$ is an ideal in $\nucc{\ma}$, $\forall t\in G$, because $I_c^t(\ma)$ is selfadjoint and $k*\pr{\xi}{\eta}_l^t=\pr{k\xi}{\eta}_l^t$, as it is easy to check. In a sense, the family $I_c^t(\ma)$ contains all the necessary information about $\nucc{\ma}$, since $\textrm{span}\{\beta_tI_c(\ma):t\in G\}=\textrm{span}\{I_c^t(\ma):t\in G\}$ is dense in $\nucc{\ma}$ in the inductive limit topology (\cite[Proposition~5.3]{env}).   
\begin{rk}
Lemma~5.3 of \cite{env} shows that $\pr{\xi}{\xi}_l$ is a positive element of $\nuc{\ma}$, $\forall \xi\in C_c(\ma)$. This implies that $\pr{\ }{}_l$ is an inner product on $C_c(\ma)$, and allows to complete $C_c(\ma)$ using $\pr{\,}{}_l$, as done in \cite{env}. Let us call $E$ to such a completion, and $\gamma$ to its norm. Then $\gamma$ induces a C*-norm $\gamma^r$ on the dense ideal $\textrm{span}\pr{C_c(\ma)}{C_c(\ma)}_r$ of $A_e$, which has a unique extension to a C*-norm on $A_e$, which in turn is unique, say $\alpha$ (see for instance \cite[Proposition~3.7, Corollary~3.8]{trings}). Since the relation between $\gamma$ and $\gamma^r$ is given by $\gamma(\xi)^2=\gamma^r(\pr{\xi}{\xi}_r)$ and, in the other hand, the norm defined on $C_c(\ma)$ by $\alpha$ is given by $\tilde{\alpha}(\xi)^2=\alpha(\pr{\xi}{\xi}_r)$, we see that $\gamma$ and $\tilde{\alpha}$ agree on $C_c(\ma)$, and therefore we conclude that $E=L^2(\ma)$. In turn, this implies that $I(\ma)$ is contained in $\nuc{\ma}$.     
\end{rk}

\begin{prop}\label{prop:extnuc}
  Let $\xi,\eta\in C_c^t(\ma)$. Then the map $\zeta\mapsto \pr{\xi}{\eta}_l^t\zeta$ on $C_c^t(\ma)$ extends by continuity to an adjointable (and compact) operator on $L^2_t(\ma)$. 
\end{prop}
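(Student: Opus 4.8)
The plan is to identify the operator $\zeta\mapsto\pr{\xi}{\eta}_l^t\zeta$ with an elementary (rank-one) compact operator on $L^2_t(\ma)$, after which adjointability and compactness are automatic. First I would evaluate it on a test element $\zeta\in C_c^t(\ma)$ by unwinding the two relevant definitions: that of the kernel $\pr{\xi}{\eta}_l^t(r,s)=\xi(r)\eta(s)^*$ and that of the left action $k\zeta(r)=\int_Gk(r,s)\zeta(s)\,ds$ of $\nucc{\ma}$ on $C_c^t(\ma)$. This gives
\begin{equation*}
(\pr{\xi}{\eta}_l^t\zeta)(r)=\int_G\xi(r)\eta(s)^*\zeta(s)\,ds=\xi(r)\int_G\eta(s)^*\zeta(s)\,ds=\xi(r)\pr{\eta}{\zeta}_r^t,
\end{equation*}
where the factor $\xi(r)$, being independent of $s$, is pulled out of the integral and the remaining integral is recognized as the right inner product $\pr{\eta}{\zeta}_r^t\in A_e$.

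Hence $\pr{\xi}{\eta}_l^t\zeta=\xi\pr{\eta}{\zeta}_r^t$; that is, the operator under study is exactly the rank-one operator $\theta_{\xi,\eta}$ on the right Hilbert $A_e$-module $L^2_t(\ma)$, where $\xi,\eta$ are viewed in $L^2_t(\ma)$ through the dense inclusion $C_c^t(\ma)\subseteq L^2_t(\ma)$ and $\theta_{\xi,\eta}(\zeta):=\xi\pr{\eta}{\zeta}_r^t$. Since $\theta_{\xi,\eta}\in\k{L^2_t(\ma)}=I_t(\ma)$, it is bounded, adjointable (with adjoint $\theta_{\eta,\xi}$) and compact; being bounded and agreeing with the given map on the dense subspace $C_c^t(\ma)$, it is precisely the asserted continuous extension. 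In the language of Remark~\ref{rk:zhm} this just says that $(\xi,\eta)\mapsto\pr{\xi}{\eta}_l^t$ realizes the $\k{L^2_t(\ma)}$-valued left inner product making $C_c^t(\ma)$ a left Hilbert $I_t(\ma)$-module.

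I do not expect a genuine obstacle: the entire content of the statement is the identification above, and once it is made, adjointability and compactness follow from the general theory of Hilbert modules. The only point demanding a little care is keeping the two inner products apart — the kernel-valued left product $\pr{\,}{}_l^t$ and the $A_e$-valued right product $\pr{\,}{}_r^t$ — and confirming that the left action of the kernel $\pr{\xi}{\eta}_l^t$ collapses, via the factorization $\xi(r)\eta(s)^*$, into the single rank-one operator $\theta_{\xi,\eta}$ rather than something larger.
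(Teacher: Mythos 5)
Your proof is correct and follows exactly the paper's own argument: unwind the definitions to show $\pr{\xi}{\eta}_l^t\zeta=\xi\pr{\eta}{\zeta}_r^t$, identify the map with the rank-one operator $\theta_{\xi,\eta}$, and invoke the standard fact that such operators are compact and adjointable. The computation pulling $\xi(r)$ out of the integral is precisely the one displayed in the paper.
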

\begin{proof}
Recall that any couple of elements $\eta,\zeta\in L^2_t(\ma)$ define a compact operator $\theta_{\eta,\zeta}$ on $L^2_t(\ma)$, via the formula $\theta_{\eta,\zeta}(\xi)=\xi\pr{\eta}{\zeta}_r^t$. So it is enough to see that for $\zeta\in C_c^t(\ma)$, we have $\pr{\xi}{\eta}^t_l\zeta=\theta_{\xi,\eta}(\zeta)$, that is $\pr{\xi}{\eta}^t_l\zeta=\xi\pr{\eta}{\zeta}_r$, $\forall \xi,\eta$ and $\zeta\in C_c(\ma)$. Now,
\begin{gather*}
  \pr{\xi}{\eta}^t_l\zeta(r)=\int_G\pr{\xi}{\eta}^t_l(r,s)\zeta(s)ds
  =\int_G\xi(r)\eta(s)^*\zeta(s)ds
  =\xi(r)\int_G\eta(s)^*\zeta(s)ds\\
  =\xi(r)\pr{\eta}{\zeta}_r^t
    =(\xi\pr{\eta}{\zeta}_r^t)(r), 
\end{gather*}
as we wanted to verify. 

\end{proof}

\par According to Proposition~\ref{prop:extnuc}, $I_t(\ma)$ is a completion of $I_c^t(\ma)=\beta_t(I_c(\ma))$. More precisely, we have  
\begin{prop}\label{prop:kt}
  $I_t(\ma)=\beta_t(I(\ma))$. 
\end{prop}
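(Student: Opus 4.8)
The plan is to reduce the statement to the already-settled case $t=e$ by conjugating with the two isometries at our disposal: the automorphism $\beta_t$ of $\nuc{\ma}$, and the isomorphism $\rho_t^l\colon I(\ma)\to I_t(\ma)$ of Corollary~\ref{cor:isomhb}. Recall first, from the Remark following Proposition~\ref{prop:extnuc} (the case $t=e$), that $I(\ma)=\k{L^2(\ma)}$ sits inside $\nuc{\ma}$ as the closure of $I_c(\ma)$, and crucially that this embedding is \emph{isometric}: the $\nuc{\ma}$-norm of a kernel $\pr{\xi}{\eta}_l$ coincides with the operator norm of the associated compact operator $\theta_{\xi,\eta}$ on $L^2(\ma)$. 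Since $\beta_t$ is an isometric automorphism of $\nuc{\ma}$ with $\beta_t(I_c(\ma))=I_c^t(\ma)$, it carries $I(\ma)=\overline{I_c(\ma)}$ onto $\overline{I_c^t(\ma)}$ (closures in $\nuc{\ma}$); hence $\beta_t(I(\ma))=\overline{I_c^t(\ma)}^{\,\nuc{\ma}}$ is already a C*-subalgebra of $\nuc{\ma}$. What remains is to identify this closure with $I_t(\ma)=\k{L^2_t(\ma)}$.

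The heart of the matter is to show that the two C*-norms on $I_c^t(\ma)$ agree: the one inherited from $\nuc{\ma}$, and the operator norm obtained by realizing kernels as compact operators on $L^2_t(\ma)$ via Proposition~\ref{prop:extnuc}. I would check this on the spanning kernels and then conclude by linearity. Write a typical element of $I_c^t(\ma)$ as $k=\beta_t(k_0)$ with $k_0=\pr{\xi}{\eta}_l\in I_c(\ma)$. The computation preceding Proposition~\ref{prop:extnuc} gives $k=\beta_t(\pr{\xi}{\eta}_l)=\pr{\rho_t\xi}{\rho_t\eta}_l^t$, whose associated operator on $L^2_t(\ma)$ is $\theta_{\rho_t\xi,\rho_t\eta}$; by Corollary~\ref{cor:isomhb} this is exactly $\rho_t^l(\theta_{\xi,\eta})$, so as operators $(\rho_t^l)^{-1}(k)=k_0$. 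Combining the isometries then yields
\begin{gather*}
  \|k\|_{\k{L^2_t(\ma)}}=\|(\rho_t^l)^{-1}(k)\|_{\k{L^2(\ma)}}=\|k_0\|_{\k{L^2(\ma)}}=\|k_0\|_{\nuc{\ma}}=\|\beta_t(k_0)\|_{\nuc{\ma}}=\|k\|_{\nuc{\ma}},
\end{gather*}
where the first equality uses that $\rho_t^l$ is isometric, the third is the $t=e$ Remark, and the fourth uses that $\beta_t$ is isometric. Extending linearly, the two norms coincide on all of $I_c^t(\ma)$, whence the completions coincide and $I_t(\ma)=\overline{I_c^t(\ma)}^{\,\nuc{\ma}}=\beta_t(I(\ma))$.

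Equivalently, one may phrase the argument as the commutativity on generators of the square formed by $\rho_t^l\colon\k{L^2(\ma)}\to\k{L^2_t(\ma)}$, the isometric embeddings $\iota,\iota_t$ of $I(\ma)$ and $I_t(\ma)$ into $\nuc{\ma}$, and $\beta_t$: since $\beta_t\circ\iota$ is an isometric embedding and $\rho_t^l$ an isometric isomorphism, the realization $\iota_t=(\beta_t\circ\iota)\circ(\rho_t^l)^{-1}$ of $I_t(\ma)$ in $\nuc{\ma}$ is forced, with image $\beta_t(I(\ma))$. I expect the only real obstacle to be bookkeeping: keeping straight the deliberate double use of $\pr{\xi}{\eta}_l^t$ for both a kernel in $\nucc{\ma}$ and a compact operator on $L^2_t(\ma)$, and making sure the two matchings on generators (via the computation before Proposition~\ref{prop:extnuc} and via Corollary~\ref{cor:isomhb}) genuinely line $\beta_t$ up with $\rho_t^l$. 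Once that identification is pinned down, the rest is a routine chain of isometries.
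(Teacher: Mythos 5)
Your proof is correct and follows essentially the same route as the paper's: both rest on the identity $\beta_t(\pr{\xi}{\eta}_l)=\pr{\rho_t\xi}{\rho_t\eta}_l^t$, which identifies $\beta_t$ with $\rho_t^l$ on the dense subspace $I_c(\ma)$, and then pass to completions using that $\rho_t^l\colon I(\ma)\to I_t(\ma)$ is an isomorphism. You merely make explicit the norm bookkeeping (that the $\nuc{\ma}$-norm and the operator norm agree on $I_c^t(\ma)$) that the paper leaves implicit in the step $\overline{\beta_t(I_c(\ma))}=\beta_t(\overline{I_c(\ma)})$.
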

\begin{proof}
  Proposition~\ref{prop:isomhb} provides a unitary operator $\rho_t:L^2(\ma)\to L^2_t(\ma)$, given on $\xi\in C_c(\ma)$ by $\rho_t\xi(r):=\Delta(t)^{1/2}\xi(rt)$. Since $\rho_t$ is an isomorphism of C*-trings, it induces an isomorphism $\rho_t^l:I(\ma)\to I_t(\ma)$, unique such that $\rho_t^l(\pr{\xi}{\eta}_r)=\pr{\rho_t\xi}{\rho_t\eta}^t_r)$ (Proposition~\ref{prop:zhm}). On the other hand, it was shown above that $\pr{\rho_t\xi}{\rho_t\eta}_l^t=\beta_t(\pr{\xi}{\eta}_l)$, $\forall \xi,\eta\in C_c(\ma)$, which entails $\beta_t=\rho_t^l|_{I_c(\ma)}$, and therefore
  \begin{gather*}
    I_c^t(\ma)=\textrm{span}\pr{C_c^t(\ma)}{C_c^t(\ma)}_l^t
    =\textrm{span}\beta_t(\pr{C_c(\ma)}{C_c(\ma)}_l) 
    =\beta_t(I_c(\ma)).
  \end{gather*}
    Then $I_t(\ma)
    =\overline{I_c^t(\ma)}=\overline{\beta_t(I_c(\ma))}
    =\beta_t(\overline{I_c(\ma)})=\beta_t(I(\ma)).$

\end{proof}

\par For the convenience of the reader we recall Lemma~2.1 of \cite{env}:

\begin{lem}\label{lem:unienv}
Suppose that $\{J_{\la}\}_{\la\in\La}$ is a family of ideals in a \cs $A$, and let 
$\norm{\cdot}_{\La}:A\to\R$ be such that 
$\norm{a}_{\La}=\sup_{\la\in\La}\{\norm{ax}:\, x\in J_\la,\norm{x}\leq 1\}$. 
Then $\norm{\cdot}_{\La}$ is a $C^*$-seminorm on $A$, such that 
$\norm{\cdot}_{\La}\leq\norm{\cdot}$, and $\norm{\cdot}_{\La}$ is a norm iff
$\ov{\gen}\{ x\in J_{\la}:\la\in\La\}$ is an essential ideal of $A$. In this 
case, $\norm{\cdot}_{\La}=\norm{\cdot}$.
\end{lem}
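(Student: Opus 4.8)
The plan is to recognize $\norm{\cdot}_\La$ as a supremum of pullbacks of C*-norms under the canonical maps into multiplier algebras, and then to read off every assertion from elementary C*-algebra facts. For each $\la$, regard the closed ideal $J_\la$ as a right Hilbert module over itself, with inner product $\pr{x}{y}=x^*y$ and module norm $\norm{x}_{J_\la}=\norm{x^*x}^{1/2}=\norm{x}$. Left multiplication by $a\in A$ is then an adjointable operator $L_a$ on $J_\la$, with adjoint $L_{a^*}$, so that $a\mapsto L_a$ is a \hm $\phi_\la:A\to\adj{J_\la}=M(J_\la)$ of C*-algebras whose operator norm is exactly $\norm{\phi_\la(a)}=\sup\{\norm{ax}:x\in J_\la,\ \norm{x}\le 1\}=:\norm{a}_\la$. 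Hence each $\norm{\cdot}_\la$ is a C*-seminorm on $A$, and since $\phi_\la$ is contractive we get $\norm{\cdot}_\la\le\norm{\cdot}$. A routine check shows that a pointwise-finite supremum of C*-seminorms is again a C*-seminorm (only the C*-identity needs a word: $\sup_\la\norm{a}_\la^2=(\sup_\la\norm{a}_\la)^2$ because $t\mapsto t^2$ is increasing), so $\norm{\cdot}_\La=\sup_\la\norm{\cdot}_\la$ is a C*-seminorm with $\norm{\cdot}_\La\le\norm{\cdot}$. This settles the first two assertions.

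Next I would compute the kernel of $\norm{\cdot}_\La$. Since $\norm{a}_\la=0$ iff $aJ_\la=0$, we have $\norm{a}_\La=0$ iff $aJ_\la=0$ for every $\la$, and by continuity this holds iff $aJ=0$, where $J=\cgen\{x\in J_\la:\la\in\La\}$. Thus $\ker\norm{\cdot}_\La=J^{\perp}:=\{a\in A:aJ=0\}$. Because closed two-sided ideals of a \cs are self-adjoint, $J^{\perp}$ is itself a closed two-sided ideal and coincides with the right annihilator $\{a:Ja=0\}$; moreover $J^{\perp}=0$ is exactly the condition that $J$ be essential (if $K$ is a nonzero closed ideal with $K\cap J=0$ then $KJ\subseteq K\cap J=0$, so $K\subseteq J^{\perp}$, and conversely). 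Therefore $\norm{\cdot}_\La$ is a norm iff $J^{\perp}=0$ iff $J$ is essential, which is the third assertion.

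Finally, assume $J$ is essential, so $\norm{\cdot}_\La$ is a genuine C*-norm on $A$. To identify it with $\norm{\cdot}$ I would invoke the uniqueness of the C*-norm. Let $B$ be the completion of $(A,\norm{\cdot}_\La)$; it is a \cs, and the identity map $\iota:(A,\norm{\cdot})\to B$ is a \hm which is injective, its kernel being $\ker\norm{\cdot}_\La=0$. An injective \hm between C*-algebras is isometric, so $\norm{a}_\La=\norm{\iota(a)}_B=\norm{a}$ for all $a$, giving $\norm{\cdot}_\La=\norm{\cdot}$. I expect this last step to be the crux: the earlier parts produce only the inequality $\norm{\cdot}_\La\le\norm{\cdot}$, and the reverse inequality does \emph{not} come from estimating $\norm{ax}$ directly, since a general element of $J$ is only a limit of finite sums drawn from different $J_\la$, which the definition of $\norm{\cdot}_\La$ does not control. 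It is precisely the rigidity of C*-norms, through the fact that injective \hms are isometric, that upgrades ``$\norm{\cdot}_\La$ is a norm'' to ``$\norm{\cdot}_\La=\norm{\cdot}$''.
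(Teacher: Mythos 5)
Your proof is correct and complete: identifying each $\norm{\cdot}_\la$ as the pullback of the norm on $M(J_\la)=\adj{J_\la}$ under the canonical \hm $A\to M(J_\la)$, computing the kernel of the supremum as the annihilator of $\cgen\{x\in J_\la:\la\in\La\}$, and invoking the automatic isometry of injective \hms between C*-algebras for the final equality is exactly the standard route. Note that the paper itself gives no proof here --- it only recalls the statement as Lemma~2.1 of \cite{env} --- so there is nothing to contrast with; your argument matches what that reference does and correctly isolates the one nontrivial point, namely that the reverse inequality comes from C*-rigidity rather than from any direct estimate.
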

As easy consequences we obtain: 
\begin{prop}\label{prop:norm}
  Let $k\in\nuc{\ma}$. Then
  \begin{equation}\label{eqn:norm}
    \norm{k}=\sup_{t\in G}\{\norm{k*h}:h\in I_t(\ma),\,t\in G\textrm{ and }\norm{h}\leq 1\}.\end{equation}
\end{prop}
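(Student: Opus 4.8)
The plan is to derive Proposition~\ref{prop:norm} as a direct application of Lemma~\ref{lem:unienv} to the \cs $A=\nuc{\ma}$ and the family of ideals $\{I_t(\ma)\}_{t\in G}$. First I would verify that each $I_t(\ma)$ is genuinely an ideal of $\nuc{\ma}$. We know from Proposition~\ref{prop:kt} that $I_t(\ma)=\beta_t(I(\ma))$, and that $I(\ma)=I_e(\ma)$ sits inside $\nuc{\ma}$ (as noted in the Remark following the ``Producing kernels'' subsection); since $\beta$ is an action by \isos of $\nuc{\ma}$, each $I_t(\ma)$ is a \scs of $\nuc{\ma}$. That it is in fact an \emph{ideal} follows from the computation already recorded at the level of $\nucc{\ma}$, namely $k*\pr{\xi}{\eta}_l^t=\pr{k\xi}{\eta}_l^t$ together with selfadjointness of $I_c^t(\ma)$, which shows $I_c^t(\ma)\id\nucc{\ma}$; passing to closures in $\nuc{\ma}$ gives $I_t(\ma)\id\nuc{\ma}$.

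With the ideal structure in place, Lemma~\ref{lem:unienv} tells me that the seminorm $\norm{k}_G:=\sup_{t\in G}\{\norm{k*h}:h\in I_t(\ma),\,\norm{h}\leq 1\}$ is a \cs-seminorm satisfying $\norm{\cdot}_G\leq\norm{\cdot}$, and that it equals the actual norm precisely when $\cgen\{h\in I_t(\ma):t\in G\}$ is an essential ideal of $\nuc{\ma}$. So the crux of the argument is to check this density-and-essentiality condition. Here I would invoke the fact, quoted in the excerpt from \cite[Proposition~5.3]{env}, that $\gen\{I_c^t(\ma):t\in G\}=\gen\{\beta_tI_c(\ma):t\in G\}$ is dense in $\nucc{\ma}$ in the \ilt, hence dense in $\nuc{\ma}$ in norm. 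This means $\cgen\{h\in I_t(\ma):t\in G\}$ is not merely essential but is all of $\nuc{\ma}$; a dense ideal is certainly essential. Lemma~\ref{lem:unienv} then yields $\norm{\cdot}_G=\norm{\cdot}$, which is exactly \eqref{eqn:norm}.

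I expect the main obstacle to be a small bookkeeping point rather than a deep one: the \ilt-density statement from \cite{env} is phrased for the \emph{dense subalgebra} $\nucc{\ma}$ and its \emph{linear} span of the $I_c^t(\ma)$, whereas Lemma~\ref{lem:unienv} requires the closed span of the $C^*$-ideals $I_t(\ma)=\ov{I_c^t(\ma)}$ inside the ambient \cs $\nuc{\ma}$. I would need to confirm that \ilt-density of $\gen\{I_c^t(\ma):t\}$ in $\nucc{\ma}$ upgrades to norm-density in $\nuc{\ma}$; this is routine because the inclusion $\nucc{\ma}\inc\nuc{\ma}$ has dense range by construction of the enveloping \cs, and the \ilt\ is finer than the $\norm{\ }_2$-topology, which in turn dominates the enveloping \cs-norm on $\nucc{\ma}$. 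Once that is settled, the essentiality hypothesis is automatic and the equality of norms follows, completing the proof.
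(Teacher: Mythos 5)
Your proposal is correct and follows exactly the route the paper intends: the paper presents Proposition~\ref{prop:norm} as an immediate consequence of Lemma~\ref{lem:unienv} applied to the family of ideals $\{I_t(\ma)\}_{t\in G}$, with the essentiality hypothesis supplied by the density of $\gen\{I_c^t(\ma):t\in G\}$ in $\nucc{\ma}$ cited from \cite[Proposition~5.3]{env}. Your additional care in checking that each $I_t(\ma)$ is an ideal and that \ilt-density upgrades to norm-density in $\nuc{\ma}$ merely fills in the details the paper leaves implicit.
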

\begin{prop}\label{prop:alphanorm}
  Let $\alpha$ be either the minimal or the maximal C*-norm on $\bats{\nuc{\ma}}{\nuc{\mb}}$, and consider the action $\beta:(G\times H)\times \bts{\nuc{A}}{\alpha}{\nuc{\mb}}\to \bts{\nuc{A}}{\alpha}{\nuc{\mb}}$ be the corresponding tensor product action $\beta^\ma\otimes\beta^\mb$, so that $\beta_{(r,s)}(k\otimes k')=\beta_r^\ma(k)\otimes\beta_s^\mb(k')$. Then  $\bts{I(\ma)}{\alpha}{I(\mb)}$ is an ideal of $\bts{\nuc{A}}{\alpha}{\nuc{\mb}}$, whose linear $\beta$-orbit is dense in $\bts{\nuc{A}}{\alpha}{\nuc{\mb}}$. Besides,
  \begin{equation}\label{eqn:alphanorm}\norm{k}=\sup_{t\in G}\{\norm{k*h}:h\in \bts{I_r(\ma)}{\alpha}{I_s(\mb)},\, r\in G,\,s\in H\textrm{ and }\norm{h}\leq 1\}.
  \end{equation}
\end{prop}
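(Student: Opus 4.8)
The plan is to reduce everything to the single-bundle norm formula of Proposition~\ref{prop:norm} by identifying $\bts{I(\ma)}{\alpha}{I(\mb)}$ with $I(\tmin{\ma}{\mb})$ (or its maximal analogue) and by showing that the tensor-product action $\beta$ is the enveloping action of its restriction to this ideal. First I would record that $\bts{\nuc{A}}{\alpha}{\nuc{\mb}}$ carries the tensor-product action $\beta=\beta^\ma\otimes\beta^\mb$ of $G\times H$, which is well defined on the completion because each factor acts isometrically on $\nuc{\ma}$ and $\nuc{\mb}$ respectively, hence so does each $\beta_{(r,s)}$ on the $\alpha$-completion. Next I would argue that $\bts{I(\ma)}{\alpha}{I(\mb)}$ sits inside $\bts{\nuc{A}}{\alpha}{\nuc{\mb}}$ as an ideal: since $I(\ma)\id\nuc{\ma}$ and $I(\mb)\id\nuc{\mb}$ are (closed, two-sided) ideals, their $\alpha$-tensor product is an ideal of the $\alpha$-tensor product of the ambient algebras; for $\alpha=\text{min}$ and $\alpha=\text{max}$ this is a standard fact about C*-tensor products of ideals (for $\text{min}$ it is the spatial tensor product of ideals, for $\text{max}$ it follows from the universal/nuclear bimodule property), so nothing bundle-specific is needed here.

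For the density of the linear $\beta$-orbit I would compute, for $r\in G$ and $s\in H$,
\begin{equation*}
  \beta_{(r,s)}\big(\bts{I(\ma)}{\alpha}{I(\mb)}\big)=\bts{\beta_r^\ma(I(\ma))}{\alpha}{\beta_s^\mb(I(\mb))}=\bts{I_r(\ma)}{\alpha}{I_s(\mb)},
\end{equation*}
where the last equality is exactly Proposition~\ref{prop:kt} applied in each variable. Now the key input is that $\gen\{\beta_t^\ma(I(\ma)):t\in G\}=\gen\{I_t(\ma):t\in G\}$ is dense in $\nuc{\ma}$ (this is the statement, recalled in the introduction, that $\beta^\ma$ is the enveloping action of its restriction to $I(\ma)$, cf.\ \cite{env}), and likewise for $\mb$. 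Taking the closed linear span of elementary tensors $\beta_r^\ma(x)\otimes\beta_s^\mb(y)$ with $x\in I(\ma)$, $y\in I(\mb)$ and using that $\bats{\nuc{\ma}}{\nuc{\mb}}$ is $\alpha$-dense in the tensor product, I would conclude that $\gen\{\beta_{(r,s)}(\bts{I(\ma)}{\alpha}{I(\mb)})\}$ is dense; the main point to check carefully is that a double limit argument (first approximating each factor in the orbit-span of its ideal, then approximating a general element of the tensor product by finite sums of elementary tensors) is valid in the $\alpha$-norm, which it is because $\alpha$ is a cross norm dominating the projective-type estimates on elementary tensors.

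Finally, equation~\eqref{eqn:alphanorm} follows by invoking Proposition~\ref{prop:norm} for the Fell bundle whose C*-algebra of kernels is $\bts{\nuc{A}}{\alpha}{\nuc{\mb}}$: since the ideals $\bts{I_r(\ma)}{\alpha}{I_s(\mb)}=\beta_{(r,s)}(\bts{I(\ma)}{\alpha}{I(\mb)})$ have dense linear span, their union generates an essential ideal, so Lemma~\ref{lem:unienv} gives $\norm{k}=\sup\{\norm{k*h}:h\in\beta_{(r,s)}(\bts{I(\ma)}{\alpha}{I(\mb)}),\ \norm{h}\le 1\}$, which is precisely the asserted supremum over $r\in G$, $s\in H$. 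The main obstacle I expect is the density argument in the middle paragraph: one must be sure that the orbit-density in each factor combines into orbit-density of the tensor ideal \emph{in the $\alpha$-norm}, rather than only in the algebraic tensor product, and that essentiality of the resulting ideal holds so that Lemma~\ref{lem:unienv} actually upgrades the seminorm to the norm.
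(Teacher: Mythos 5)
Your proposal is correct and follows essentially the same route as the paper: observe that $\bts{I(\ma)}{\alpha}{I(\mb)}$ is an ideal, obtain density of its linear $\beta$-orbit from the orbit-density in each factor (via Proposition~\ref{prop:kt} and the cross-norm property of $\alpha$), and apply Lemma~\ref{lem:unienv} to the resulting family of ideals, whose dense linear span is automatically an essential ideal. The only caveat is your passing phrase about invoking Proposition~\ref{prop:norm} ``for the Fell bundle whose C*-algebra of kernels is $\bts{\nuc{\ma}}{\alpha}{\nuc{\mb}}$'' --- that identification is the paper's main theorem and would be circular here --- but your actual derivation goes directly through Lemma~\ref{lem:unienv}, which is exactly what the paper does.
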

\begin{proof}
  It is clear that $\bts{I(\ma)}{\alpha}{I(\mb)}$ is an ideal of $\bts{\nuc{A}}{\alpha}{\nuc{\mb}}$, and the fact that the linear span of the orbit of $\bts{I(\ma)}{\alpha}{I(\mb)}$ is dense in $\bts{\nuc{\ma}}{\alpha}{\nuc{\mb}}$ follows at once from the density of the linear orbits of $I(\ma)$ and $I(\mb)$ in $\nuc{\ma}$ and $\nuc{\mb}$ respectively. Then \eqref{eqn:alphanorm} follows from Lemma~\ref{lem:unienv}. 
  \end{proof}
\medskip

\section{Kernels of tensor products}\label{sec:tensfell}

\par Let $\ma=(A_r)_{r\in G}$ and $\mb=(B_s)_{s\in H}$ be Fell bundles over the locally compact groups $G$ and $H$ respectively. Then $L^2(\ma)$ and
$L^2(\mb)$ are full right Hilbert modules over $A_e$ and $B_e$ respectively. If $\al$ is a $C^*$-norm on $\bats{\ma}{\mb}$, then $\al|_{(\bats{A_e}{B_e})}$ is a C*-norm on $\bats{A_e}{B_e}$. As shown in \cite{tens}, if $\alpha$ is the minimal (maximal) norm on $\bats{\ma}{\mb}$, then $\al|_{(\bats{A_e}{B_e})}$ is the minimal (respectively: maximal) norm on $\bats{A_e}{B_e}$.  On the other hand, $\al|_{(\bats{A_e}{B_e})}$ defines a C*-norm $\tilde{\alpha}$ on $\bats{L^2(A)}{L^2(\mb)}$, given by  $\tilde{\alpha}(\mu):=\sqrt{\alpha(\pr{\mu}{\mu})}$, $\forall \mu\in \bats{L^2(A)}{L^2(\mb)}$. Again, if $\al|_{(\bats{A_e}{B_e})}$ is the minimal (maximal) norm on $\bats{A_e}{B_e}$, then $\tilde{\alpha}$ is the minimal (maximal) norm on $\bats{L^2(\ma)}{L^2(\mb)}$.  
The completion $\bts{L^2(\ma)}{\tilde{\alpha}}{L^2(\mb)}$ of $\bats{L^2(A)}{L^2(\mb)}$ with respect to $\tilde{\alpha}$ is a full right Hilbert module over $\bts{A_e}{\alpha|_{A_e\otimes_\alpha B_e}}{B_e}$, 
where the corresponding inner
product is determined by $\pr{\xi_1\otimes\eta_1}{\xi_2\otimes\eta_2}
=\pr{\xi_1}{\xi_2}\otimes\pr{\eta_1}{\eta_2}$,  
$\forall \xi_1, \xi_2\in L^2(\ma)$, $\eta_1, \eta_2\in L^2(\mb)$.
\par In the remaining of the paper we will consider only the minimal norm $\alpha=\norm{\ }_{\textrm{min}}$ or the maximal norm $\alpha=\norm{\ }_{\textrm{max}}$ on $\bats{\ma}{\mb}$. Since in both cases we will have correspondingly that $\tilde{\alpha}=\norm{\ }_{\textrm{min}}$ and $\tilde{\alpha}=\norm{\ }_{\textrm{max}}$, we will use just the symbol $\alpha$, which will therefore be either $\norm{\ }_{\textrm{min}}$ or $\norm{\ }_{\textrm{max}}$.     
\bigskip 

We fix elements $r\in G$ and $s\in H$. Consider the map $j^{(r,s)}:C^{r}_c(\ma)\odot C^{s}_c(\mb)\to C^{(r,s)}_c(\tmin{\ma}{\mb})$ such that $\xi\odot \eta\mapsto \xi\oslash \eta$, where $\xi\oslash \eta(r',s'):=\xi(r')\otimes \eta(s')$, $\forall (r',s')\in G\times H$.  
\par For the proof of the following result we refer the reader to \cite[Lemma~4.4]{env}.

\begin{lem}\label{lem:l2}
Let \fela and \felbh be \fbs, and let $\al$ represent either the minimal or maximal C*-norm. Then there exists a unique isomorphism   
$j_2:\bts{L^2(\ma)}{\al}{L^2(\mb)}\to  L^2(\bts{\ma}{\al}{\mb}),$ 
such that $j_2(\xi\otimes\eta)=\xi\oslash\eta$, 
$\forall\xi\in C_c(\ma)\subseteq L^2(\ma)$, $\eta\in C_c(\mb)\subseteq 
L^2(\mb)$. Thus we have
$\tmin{L^2(\ma)}{L^2(\mb)}\cong L^2(\tmin{\ma}{\mb})$ and $\tmax{L^2(\ma)}{L^2(\mb)}\cong L^2(\tmax{\ma}{\mb})$.
\end{lem}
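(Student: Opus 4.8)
The plan is to construct $j_2$ by first defining it on the algebraic tensor product, verifying that it preserves inner products, and then extending by continuity and proving surjectivity. The bilinear map $C_c(\ma)\times C_c(\mb)\to C_c(\bts{\ma}{\alpha}{\mb})$, $(\xi,\eta)\mapsto \xi\oslash\eta$, factors through the algebraic tensor product and yields a linear map $j_0:=j^{(e,e)}:C_c(\ma)\odot C_c(\mb)\to C_c(\bts{\ma}{\alpha}{\mb})$. Since $C_c(\ma)$ and $C_c(\mb)$ are dense in $L^2(\ma)$ and $L^2(\mb)$, the space $C_c(\ma)\odot C_c(\mb)$ is $\tilde{\alpha}$-dense in $\bts{L^2(\ma)}{\alpha}{L^2(\mb)}$, so it suffices to control $j_0$ on this dense subspace and then extend.

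The first key step is the inner-product computation. For $\xi_1,\xi_2\in C_c(\ma)$ and $\eta_1,\eta_2\in C_c(\mb)$, using that the Haar measure of $G\times H$ is the product of the Haar measures and that multiplication in $\bts{\ma}{\alpha}{\mb}$ is the tensor product of the multiplications, one computes directly
\[
\pr{\xi_1\oslash\eta_1}{\xi_2\oslash\eta_2}=\int_{G\times H}\bigl(\xi_1(r')^*\xi_2(r')\bigr)\otimes\bigl(\eta_1(s')^*\eta_2(s')\bigr)\,d(r',s')=\pr{\xi_1}{\xi_2}\otimes\pr{\eta_1}{\eta_2},
\]
which is exactly the defining formula \eqref{eqn:bil} for the inner product on the exterior tensor product. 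Hence $\pr{j_0(\mu)}{j_0(\nu)}=\pr{\mu}{\nu}$ for all $\mu,\nu$ in the algebraic tensor product, the right-hand side being the $\bats{A_e}{B_e}$-valued inner product regarded inside $\bts{A_e}{\alpha}{B_e}$. Combining the relation $\tilde{\alpha}(\mu)^2=\alpha(\pr{\mu}{\mu})$ from \eqref{eqn:alphas} with the fact that the norm on $C_c(\bts{\ma}{\alpha}{\mb})\subseteq L^2(\bts{\ma}{\alpha}{\mb})$ is $\norm{k}^2=\alpha(\pr{k}{k})$, this shows $j_0$ is isometric. It therefore extends uniquely to an inner-product-preserving isometry $j_2:\bts{L^2(\ma)}{\alpha}{L^2(\mb)}\to L^2(\bts{\ma}{\alpha}{\mb})$; uniqueness of $j_2$ is automatic, since it is prescribed on a dense subspace.

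It remains to show $j_2$ is surjective, for which I would prove that the linear span of $\{\xi\oslash\eta:\xi\in C_c(\ma),\ \eta\in C_c(\mb)\}$ is dense in $C_c(\bts{\ma}{\alpha}{\mb})$ in the inductive limit topology, hence in the $L^2$-norm (on a fixed compact support the latter is dominated by a constant times the former). This is where the main difficulty lies: one must approximate an arbitrary continuous compactly supported section $(r',s')\mapsto k(r',s')\in\bts{A_{r'}}{\alpha}{B_{s'}}$ by finite sums of elementary sections $(r',s')\mapsto\xi(r')\otimes\eta(s')$. I would carry this out with a partition-of-unity/Stone--Weierstrass argument for the Banach bundle $\bts{\ma}{\alpha}{\mb}$: locally, elements of the fibre $\bts{A_{r'}}{\alpha}{B_{s'}}$ are norm-approximated by finite sums of simple tensors $a\otimes b$ with $a\in A_{r'}$, $b\in B_{s'}$; choosing continuous local sections of $\ma$ and $\mb$ through such $a,b$ and multiplying by scalar functions $f\in C_c(G)$, $g\in C_c(H)$ (whose products span a dense subset of $C_c(G\times H)$) produces the required approximations, which one patches together over the compact support of $k$ using a partition of unity.

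Finally, a surjective inner-product-preserving map between right Hilbert modules over the common base algebra $\bts{A_e}{\alpha}{B_e}$ is a unitary by \cite[Theorem~3.5]{l}; being a unitary, $j_2$ is automatically $\bts{A_e}{\alpha}{B_e}$-linear and preserves the ternary product $x\pr{y}{z}$, so it is simultaneously an isomorphism of Hilbert modules and of C*-trings, in the spirit of Proposition~\ref{prop:zhm} and Remark~\ref{rk:zhm}. Specializing $\alpha$ to the minimal and to the maximal norm then gives the two displayed isomorphisms. I expect the density statement of the third paragraph to be the only substantial obstacle; the remaining steps are either the direct integral computation above or formal consequences of the Hilbert-module formalism.
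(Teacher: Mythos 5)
Your argument is correct, and it is essentially the standard proof that the paper itself omits (it only cites \cite[Lemma~4.4]{env}): an inner-product computation on elementary tensors giving an isometry for $\tilde{\alpha}$, followed by density of $\mathrm{span}\{\xi\oslash\eta\}$ in $C_c(\bts{\ma}{\alpha}{\mb})$ in the inductive limit topology (the Fell--Doran criterion: fibrewise dense span plus stability under multiplication by $C_c(G)\odot C_c(H)$, which is uniformly dense in $C_c(G\times H)$) to get surjectivity, hence a unitary. The only facts you use beyond this are the ones the paper already records, namely that the minimal (maximal) bundle norm restricts to the minimal (maximal) norm on $\bats{A_e}{B_e}$ and that $\widetilde{\norm{\ }}_{\textrm{min}}=\norm{\ }_{\textrm{min}}$, $\widetilde{\norm{\ }}_{\textrm{max}}=\norm{\ }_{\textrm{max}}$, so no gap remains.
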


\begin{rk}\label{rk:l2}
  Note that the isomorphism $j_2^l$ induced by $j_2$ according to Proposition~\ref{prop:zhm} (see also Remark~\ref{rk:zhm}) is determined by the fact that $\forall \xi_1,\xi_2\in C_c(\ma)$ and $\forall \eta_1,\eta_2\in C_c(\mb)$ we have $j_2(\pr{\xi_1}{\xi_2}_l\odot\pr{\eta_1}{\eta_2}_l)=\pr{\xi_1\oslash\eta_1}{\xi_2\oslash\eta_2}_l$. 
\end{rk}

\begin{prop}\label{prop:l2t}
  The map $j_2^{(r,s)}:C^{r}_c(\ma)\odot C^{s}_c(\mb)\to C^{(r,s)}_c(\bts{\ma}{\alpha}{\mb})$ has a unique extension to an isomorphism of Hilbert $\bts{A_e}{\alpha}{B_e}$-modules such that the following diagram is commutative:
  \[
    \xymatrix{
      \bts{L^2(\ma)}{\alpha}{L^2(\mb)}\ar[rr]^{\quad\rho^\ma_{r}\otimes\rho^\mb_{s}}_{\cong}\ar[d]_{j_2}^\cong&& \bts{L^2_{r}(\ma)}{\alpha}{L^2_{s}(\mb)}\ar[d]^{j_2^{(r,s)}}
      \\
 L^2(\bts{\ma}{\alpha}{\mb})\ar[rr]_{\rho^{\bts{\ma}{\alpha}{\mb}}_{(r,s)}}^\cong
      &&       L^2_{(r,s)}(\bts{\ma}{\alpha}{\mb})
      }
    \]
    The unique isomorphism $(j_2^{(r,s)})^l:\bts{I_{r}(\ma)}{\alpha}{I_{s}(\mb)}\to I_{(r,s)}(\bts{\ma}{\alpha}{\mb})$ given by Proposition~\ref{prop:zhm} is determined by the fact that $\pr{\xi_1}{\xi_2}_l\odot\pr{\eta_1}{\eta_2}_l\mapsto\pr{\xi_1\oslash\eta_1}{\xi_2\oslash\eta_2}^{(r,s)}_l$, $\forall \xi\in C_c^{r}(\ma)$, $\eta\in C_c^{s}(\mb)$.   
\end{prop}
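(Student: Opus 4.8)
The plan is to realize the asserted extension of $j_2^{(r,s)}$ as a composition of isomorphisms that are already at our disposal, and then to check by a short computation that this composition restricts to the elementary formula $\xi\odot\eta\mapsto\xi\oslash\eta$ on the dense subspace. Concretely, I would set $T:=\rho^{\bts{\ma}{\alpha}{\mb}}_{(r,s)}\circ j_2\circ(\rho^\ma_{r}\otimes\rho^\mb_{s})^{-1}$. Each factor is an isomorphism of Hilbert $\bts{A_e}{\alpha}{B_e}$-modules: the unitaries $\rho^\ma_{r}$, $\rho^\mb_{s}$ and $\rho^{\bts{\ma}{\alpha}{\mb}}_{(r,s)}$ are provided by Proposition~\ref{prop:isomhb}, and since each preserves its right inner product it induces the identity on its base algebra, so the exterior product $\rho^\ma_{r}\otimes\rho^\mb_{s}$ is again a unitary over $\bts{A_e}{\alpha}{B_e}$; meanwhile $j_2$ is the isomorphism of Lemma~\ref{lem:l2}, whose base algebra $(\bts{\ma}{\alpha}{\mb})_{(e,e)}$ is once more $\bts{A_e}{\alpha}{B_e}$. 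Hence $T$ is an isomorphism of Hilbert $\bts{A_e}{\alpha}{B_e}$-modules, in particular a C*-tring isomorphism, and the square commutes by the very definition of $T$.

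It then remains only to check that $T$ agrees with the prescribed elementary map on the dense subspace $C^{r}_c(\ma)\odot C^{s}_c(\mb)$; equivalently, that the elementary $j_2^{(r,s)}$ satisfies $j_2^{(r,s)}\circ(\rho^\ma_{r}\otimes\rho^\mb_{s})=\rho^{\bts{\ma}{\alpha}{\mb}}_{(r,s)}\circ j_2$ on $C_c(\ma)\odot C_c(\mb)$. Since $\rho^\ma_{r}$ restricts to a bijection $C_c(\ma)\to C^{r}_c(\ma)$ (and likewise for $\mb$), I would simply evaluate both sides on an elementary tensor $\xi\otimes\eta$. The left-hand side produces the section whose value at $(r',s')$ is $\Delta_G(r)^{1/2}\Delta_H(s)^{1/2}\,\xi(r'r)\otimes\eta(s's)$, while the right-hand side gives $\Delta_{G\times H}(r,s)^{1/2}\,\xi(r'r)\otimes\eta(s's)$; these coincide because the modular function of $G\times H$ factors as $\Delta_{G\times H}(r,s)=\Delta_G(r)\Delta_H(s)$. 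This matching of modular functions is the only genuine computation, and it identifies $T$ as the unique continuous extension of $j_2^{(r,s)}$ while simultaneously confirming commutativity of the diagram.

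For the last assertion I would invoke the left-hand version of Proposition~\ref{prop:zhm}, in the form of Remark~\ref{rk:zhm}. Regarding $\bts{L^2_{r}(\ma)}{\alpha}{L^2_{s}(\mb)}$ and $L^2_{(r,s)}(\bts{\ma}{\alpha}{\mb})$ as full left Hilbert modules over their compact-operator algebras, the C*-tring isomorphism $j_2^{(r,s)}=T$ induces an isomorphism satisfying $(j_2^{(r,s)})^l(\pr{u}{v}_l)=\pr{j_2^{(r,s)}(u)}{j_2^{(r,s)}(v)}_l$; plugging in $u=\xi_1\otimes\eta_1$, $v=\xi_2\otimes\eta_2$ and using $j_2^{(r,s)}(\xi_i\otimes\eta_i)=\xi_i\oslash\eta_i$ yields exactly the stated formula $\pr{\xi_1}{\xi_2}_l\odot\pr{\eta_1}{\eta_2}_l\mapsto\pr{\xi_1\oslash\eta_1}{\xi_2\oslash\eta_2}^{(r,s)}_l$. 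The step I expect to require the most care is the identification of the domain $\k{\bts{L^2_{r}(\ma)}{\alpha}{L^2_{s}(\mb)}}$ with $\bts{I_{r}(\ma)}{\alpha}{I_{s}(\mb)}$, that is, the fact that the compact operators on an exterior tensor product of Hilbert modules form the $\alpha$-tensor product of the two compact-operator algebras. This rests on the standard Hilbert-module identification together with the matching $\alpha\leftrightarrow\tilde{\alpha}$ of minimal and maximal norms established in \cite{trings}, and it is really here that the full force of the C*-tring machinery enters.
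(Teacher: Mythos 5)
Your proof is correct and follows essentially the same route as the paper: build the extension as the composite $\rho^{\bts{\ma}{\alpha}{\mb}}_{(r,s)}\circ j_2\circ(\rho^\ma_{r}\otimes\rho^\mb_{s})^{-1}$ of the three isomorphisms already established (Proposition~\ref{prop:isomhb} and Lemma~\ref{lem:l2}), verify agreement with the elementary map on compactly supported sections, and then apply Proposition~\ref{prop:zhm} together with Remark~\ref{rk:zhm} for the induced map on compact operators. The only difference is one of detail: you write out the modular-function factorization $\Delta_{G\times H}(r,s)=\Delta_G(r)\Delta_H(s)$ that the paper dismisses as ``clear,'' and you explicitly flag the identification $\k{\bts{L^2_{r}(\ma)}{\alpha}{L^2_{s}(\mb)}}\cong\bts{I_{r}(\ma)}{\alpha}{I_{s}(\mb)}$ from \cite{trings}, both of which are welcome clarifications rather than deviations.
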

\begin{proof}
The first vertical arrow exists and is an isomorphism by Lemma~\ref{lem:l2}. The bottom horizontal arrow exists and is an isomorphism due to Proposition~\ref{prop:isomhb}. For the same reason $\rho^\ma_{r}$ and $\rho^\mb_{s}$ are isomorphisms, and hence the top horizontal arrow is an isomorphism. The commutativity of the diagram at the level of compactly supported continuous functions is clear, so  $j_2^{(r,s)}$ extends to the second vertical arrow which, in view of the previous remarks, must be an isomorphism. Finally, note that applying Proposition~\ref{prop:zhm} to the objects of the diagram above produces a commutative diagram of C*-algebras and isomorphisms (the compact operators and the maps between them induced in each case), so the last  statement follows now from Remark~\ref{rk:l2}
\end{proof}

\bigskip
\begin{thm}\label{thm:main}
  Let $\ma=(A_r)_{r\in G}$ and $\mb=(B_s)_{s\in H}$ be Fell bundles over the locally compact groups $G$ and $H$ respectively. Then
  \[\nuc{\tmin{\ma}{\mb}}=\tmin{\nuc{\ma}}{\nuc{\mb}}\qquad\textrm{ and }\qquad \nuc{\tmax{\ma}{\mb}}=\tmax{\nuc{\ma}}{\nuc{\mb}}.\] 
\end{thm}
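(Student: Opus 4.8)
The plan is to produce a single explicit map and show it is an isometric $*$-isomorphism, arguing uniformly for $\alpha=\norm{\ }_{\textrm{min}}$ and $\alpha=\norm{\ }_{\textrm{max}}$. The candidate is the \emph{external kernel product}
\[
\psi:\nucc{\ma}\odot\nucc{\mb}\longrightarrow\nucc{\tmin{\ma}{\mb}},\qquad
(k\oslash k')\bigl((r,s),(r',s')\bigr):=k(r,r')\otimes k'(s,s'),
\]
extended bilinearly. First I would check, by a direct computation with the convolution product and the involution on kernels, that $\psi$ is a $*$-homomorphism, that it intertwines the tensor action $\beta=\beta^\ma\otimes\beta^\mb$ with the canonical action $\beta^{\tmin{\ma}{\mb}}$, and that it sends $I_c^r(\ma)\odot I_c^s(\mb)$ into $I_c^{(r,s)}(\tmin{\ma}{\mb})$. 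Evaluating $\psi$ on a generator $\pr{\xi_1}{\xi_2}_l\odot\pr{\eta_1}{\eta_2}_l$ yields exactly $\pr{\xi_1\oslash\eta_1}{\xi_2\oslash\eta_2}_l$, so on $I_c(\ma)\odot I_c(\mb)$ the map $\psi$ coincides with the isomorphism $(j_2^{(e,e)})^l=j_2^l$ of Proposition~\ref{prop:l2t}. Combining the equivariance of $\psi$ with the commutative diagram of Proposition~\ref{prop:l2t} (after applying the functor $(-)^l$, using $(\rho_r)^l=\beta_r$ on $I(\ma)$ from the proof of Proposition~\ref{prop:kt}), I would propagate this identification to every translate, obtaining that $\psi$ agrees with the isomorphism $(j_2^{(r,s)})^l:\bts{I_r(\ma)}{\alpha}{I_s(\mb)}\to I_{(r,s)}(\tmin{\ma}{\mb})$ on each $I_c^r(\ma)\odot I_c^s(\mb)$. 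In particular $\psi$ is \emph{isometric} there, with dense range in $I_{(r,s)}(\tmin{\ma}{\mb})$.

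The engine of the proof is then the pair of supremum norm formulas. Given $x\in\nucc{\ma}\odot\nucc{\mb}$, regarded inside $\bts{\nuc{\ma}}{\alpha}{\nuc{\mb}}$, Proposition~\ref{prop:alphanorm} computes $\norm{x}$ as the supremum of $\norm{x*h}$ over $h$ in the unit balls of the ideals $\bts{I_r(\ma)}{\alpha}{I_s(\mb)}$; by continuity this supremum may be taken over $h$ in the dense algebraic pieces $I_c^r(\ma)\odot I_c^s(\mb)$. For such $h$ the product $x*h$ again lies in $I_c^r(\ma)\odot I_c^s(\mb)$ (each $I_c^t$ is an ideal), where $\psi$ is the literal isometry $(j_2^{(r,s)})^l$; hence $\norm{x*h}=\norm{\psi(x*h)}=\norm{\psi(x)*\psi(h)}$, with $\psi(h)$ ranging over a dense subset of the unit ball of $I_{(r,s)}(\tmin{\ma}{\mb})$. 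Taking suprema and invoking Proposition~\ref{prop:norm} for the bundle $\tmin{\ma}{\mb}$ over $G\times H$ gives
\[
\norm{x}=\sup\bigl\{\norm{\psi(x)*w}:\ w\in I_{(r,s)}(\tmin{\ma}{\mb}),\ r\in G,\ s\in H,\ \norm{w}\leq 1\bigr\}=\norm{\psi(x)},
\]
so that $\psi$ is isometric on a dense $*$-subalgebra of $\bts{\nuc{\ma}}{\alpha}{\nuc{\mb}}$.

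It then follows that $\psi$ extends to an injective $*$-homomorphism $\Psi:\bts{\nuc{\ma}}{\alpha}{\nuc{\mb}}\to\nuc{\tmin{\ma}{\mb}}$ with closed range. This range contains $\overline{\psi(I_c^r(\ma)\odot I_c^s(\mb))}=I_{(r,s)}(\tmin{\ma}{\mb})$ for every $(r,s)\in G\times H$; since the linear span of the $\beta^{\tmin{\ma}{\mb}}$-orbit of $I(\tmin{\ma}{\mb})$ is dense in $\nuc{\tmin{\ma}{\mb}}$, the range is everything, and $\Psi$ is an isomorphism. As the entire argument only used that $\alpha$ is the minimal or the maximal norm (through the fact that $\tilde{\alpha}$ is again minimal or maximal and through Propositions~\ref{prop:l2t}, \ref{prop:norm} and \ref{prop:alphanorm}), both identities of Theorem~\ref{thm:main} are obtained simultaneously.

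The step I expect to be the main obstacle is the norm-matching in the second paragraph: one must verify that the two supremum formulas correspond term by term, i.e. that under $\psi$ the unit balls of $\bts{I_r(\ma)}{\alpha}{I_s(\mb)}$ map onto (dense subsets of) the unit balls of $I_{(r,s)}(\tmin{\ma}{\mb})$, and that the multipliers $x*h$ stay inside the algebraic ideal where $\psi$ is \emph{literally} the isometry $(j_2^{(r,s)})^l$ rather than merely cobounded by it. Establishing the clean identification $\psi|_{I_c^r(\ma)\odot I_c^s(\mb)}=(j_2^{(r,s)})^l$ for all $(r,s)$ — which is what reduces the global norm comparison to the local isometries supplied by Proposition~\ref{prop:l2t} — is the delicate bookkeeping on which everything turns; by contrast the multiplicativity of $\psi$, its equivariance, and the final surjectivity are comparatively routine given the machinery already in place.
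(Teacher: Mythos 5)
Your proposal is correct and follows essentially the same route as the paper: the same explicit map on $\bats{\nucc{\ma}}{\nucc{\mb}}$, the identification with the isomorphisms $(j_2^{(r,s)})^l$ on each $I_c^r(\ma)\odot I_c^s(\mb)$, and the two supremum norm formulas of Propositions~\ref{prop:norm} and \ref{prop:alphanorm} to get isometry, then density for surjectivity. The only (immaterial) differences are that the paper verifies the identification with $(j_2^{(r,s)})^l$ by direct computation for arbitrary $(r,s)$ rather than by equivariant propagation from $(e,e)$, and that you spell out the surjectivity step more explicitly than the paper does.
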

\begin{proof}
  Let $\alpha$ be either $\norm{\ }_{\textrm{min}}$ or $\norm{\ }_{\textrm{max}}$. By the universal property of tensor products of vector spaces, we have a map $\nu:\bats{\nucc{\ma}}{\nucc{\mb}}\to \nuc{\bts{\ma}{\alpha}{\mb}}$, such that $\nu(k_1\odot k_2)\big((r_1,s_1),(r_2,s_2)\big)=k_1(r_1,r_2)\otimes k_2(s_1,s_2)$, $\forall (r_1,s_1), (r_2,s_2)\in G\times H$. When restricted to each $I_c^r(\ma)\odot I_c^s(\mb)$, $\nu$ agrees with the restriction to this set of the isomorphism $(j_2^{(r,s)})^l:\bts{I_{r}(\ma)}{\alpha}{I_{s}(\mb)}\to I_{(r,s)}(\bts{\ma}{\alpha}{\mb})$, for if $\xi_1,\xi_2\in C_c^{r}(\ma)$ and $\eta_1,\eta_2\in C_c^{s}(\mb)$, then
  \begin{gather*}
    \nu\big(\pr{\xi_1}{\xi_2}_l^{r}\odot \pr{\eta_1}{\eta_2}_l^{s}\big)\big((r_1,s_1),(r_2,s_2)\big)
    =\pr{\xi_1}{\xi_2}_l^{r}(r_1,r_2)\otimes \pr{\eta_1}{\eta_2}_l^{s}(s_1,s_2)\\
    =\xi_1(r_1)\xi_2(r_2)^*\otimes \eta_1(s_1)\eta_2(s_2)^*
    =\big(\xi_1(r_1)\otimes \eta_1(s_1)\big)\big(\xi_2(r_2)^*\otimes \eta_2(s_2)^*\big)\\
    =\big(\xi_1\oslash\eta_1(r_1,s_1)\big)\big(\xi_2\oslash\eta_2(r_1,s_1)\big)^*
    =\pr{\xi_1\oslash\eta_1}{\xi_2\oslash\eta_2}_l^{(r,s)}\big((r_1,s_1),(r_2,s_2)\big)\\
    =(j_2^{(r,s)})^l\big(\pr{\xi_1}{\xi_2}_l^{r}\odot \pr{\eta_1}{\eta_2}_l^{s}\big)\big((r_1,s_1),(r_2,s_2)\big). 
    \end{gather*}

    \par Now let $k\in\bats{\nucc{\ma}}{\nucc{\mb}}\subseteq \bts{\nuc{\ma}}{\alpha}{\nuc{\mb}}$. If $k\in I_c^{r}(\ma)\odot I_c^{s}(\mb)$, according with the computations above we have $\norm{\nu(k)}=\norm{(j_2^{(r,s)})^l(k)}=\norm{k}=\alpha(k)$, because $(j_2^{(r,s)})^l$ is isometric. Now if $k$ is arbitrary, using this and that $(j_2^{(r,s)})^l:\bts{I_{r}(\ma)}{\alpha}{I_{s}(\mb)}\to I_{(r,s)}(\bts{\ma}{\alpha}{\mb})$ is an isomorphism, plus Proposition~\ref{prop:alphanorm} and the formulas \eqref{eqn:norm} and \eqref{eqn:alphanorm} in Proposition~\ref{prop:norm} and Proposition~\ref{prop:alphanorm} respectively, we have: 
  \begin{gather*}
    \norm{\nu(k)}
    \stackrel{\eqref{eqn:norm}}{=}\sup_{(r,s)\in G\times H}\{\norm{\nu(k)*h'}:h'\in I_{(r,s)}(\bts{\ma}{\alpha}{\mb})\textrm{ and }\norm{h'}\leq 1\}\\
    =\sup_{(r,s)\in G\times H}\{\norm{\nu(k)*(j_2^{(r,s)})^l(h)}:h\in \bts{I_r(\ma)}{\alpha}{I_s(\mb)}\textrm{ and }\norm{h}\leq 1\}\\
    =\sup_{(r,s)\in G\times H}\{\norm{\nu(k)*\nu(h)}:h\in \bts{I_r(\ma)}{\alpha}{I_s(\mb)}\textrm{ and }\norm{h}\leq 1\}\\
        =\sup_{(r,s)\in G\times H}\{\norm{\nu(k*h)}:h\in \bts{I_r(\ma)}{\alpha}{I_s(\mb)}\textrm{ and }\norm{h}\leq 1\}\\    
        \stackrel{\ref{prop:alphanorm}}{=}\sup_{(r,s)\in G\times H}\{\norm{(j_2^{(r,s)})^l(k*h)}:h\in \bts{I_r(\ma)}{\alpha}{I_s(\mb)}\textrm{ and }\norm{h}\leq 1\}\\    
        =\sup_{(r,s)\in G\times H}\{\norm{k*h}:h\in \bts{I_r(\ma)}{\alpha}{I_s(\mb)}\textrm{ and }\norm{h}\leq 1\} \\
        \stackrel{\eqref{eqn:alphanorm}}{=}\norm{k}.
      \end{gather*}
      Then $\nu$ is an isometric map between the dense subspace $\nu:\bats{\nucc{\ma}}{\nucc{\mb}}$ of $\bts{\nuc{\ma}}{\alpha}{\mb}$ and its image, so it extends by continuity to an isomorphism.  
\end{proof}

\end{document}